\newtheorem{theoremcounter}{Theorem Counter}[section]
\theoremstyle{definition}
\newtheorem{definition}[theoremcounter]{Definition}
\newtheorem{remark}[theoremcounter]{Remark}
\newtheorem{example}[theoremcounter]{Example}
\theoremstyle{plain}
\newtheorem{lemma}[theoremcounter]{Lemma}
\newtheorem{proposition}[theoremcounter]{Proposition}
\newtheorem{corollary}[theoremcounter]{Corollary}
\newtheorem{conjecture}[theoremcounter]{Conjecture}
\newtheorem{theorem}[theoremcounter]{Theorem}
\numberwithin{equation}{section}
\newcommand{\Z}{{\mathbb Z}}
\newcommand{\Q}{{\mathbb Q}}
\def\sts#1#2{\genfrac{\{}{\}}{0pt}{}{#1}{#2}}
\title[Stephan's observation]{Remarkable relations between the central binomial series, Eulerian polynomials, and poly-Bernoulli numbers}
\author{Be\'ata B\'enyi}
\address{\noindent Faculty of Water Sciences, University of Public Service, Baja, HUNGARY}
\email{benyi.beata@uni-nke.hu}
\author{Toshiki Matsusaka}
\address{Institute for Advanced Research, Nagoya University, Furo-cho, Chikusa-ku, Nagoya, 464-8601, JAPAN}
\email{matsusaka.toshiki@math.nagoya-u.ac.jp}
\date{\today}
\subjclass[2010]{11B68, 05A05}
\keywords{Central binomial series, Poly-Bernoulli number, Eulerian polynomial}
\thanks{The second author was supported by JSPS KAKENHI Grant Number 20K14292 and 21K18141.}
\begin{document}

\begin{abstract}
	The central binomial series at negative integers are expressed as a linear combination of values of certain two polynomials. We show that one of the polynomials is a special value of the bivariate Eulerian polynomial and the other polynomial is related to the antidiagonal sum of poly-Bernoulli numbers. As an application, we prove Stephan's observation from 2004.
\end{abstract}

\maketitle

\section{Introduction}

The \emph{central binomial series} is a Dirichlet series defined by
\begin{align}\label{dir_ser}
	\zeta_{CB}(s) = \sum_{n=1}^\infty \frac{1}{n^s {2n \choose n}} \quad (s \in \mathbb{C}).
\end{align}
Borwein, Broadhurst, and Kamnitzer~\cite{BorweinBroadhurstKamnitzer2001} studied 
special values $\zeta_{CB}(k)$ at positive integers and recovered some remarkable connections. 
A classical evaluation is $\zeta_{CB}(4) = \frac{17\pi^{4}}{3240} = \frac{17}{36}\zeta(4)$.
In particular, for $k\geq 2$ Borwein--Broadhurst--Kamnitzer showed that $\zeta_{CB}(k)$ can be written as a $\mathbb{Q}$-linear combination of multiple zeta values and multiple Clausen and Glaisher values. 

On the other hand, Lehmer~\cite{Lehmer1985} proved that for $k\leq 1$, $\zeta_{CB}(k)$ is a $\mathbb{Q}$-linear combination of $1$ and $\pi/\sqrt{3}$. For example, we have
\begin{align*}
	\zeta_{CB}(1) = \frac{1}{3} \frac{\pi}{\sqrt{3}}, \quad \zeta_{CB}(0) = \frac{1}{3} + \frac{2}{9} \frac{\pi}{\sqrt{3}}, \quad \zeta_{CB}(-1) = \frac{2}{3} + \frac{2}{9} \frac{\pi}{\sqrt{3}}, \quad \zeta_{CB}(-2) = \frac{4}{3} + \frac{10}{27} \frac{\pi}{\sqrt{3}}.
\end{align*}
He considered the general sum
\begin{align*}
	\sum_{n=1}^{\infty}\frac{(2x)^{2n}}{n\binom{2n}{n}} = \frac{2x\arcsin (x)}{\sqrt{1-x^2}} \quad (|x|<1)
\end{align*}
and its derivatives to derive interesting series evaluations. 
More precisely, Lehmer provided the following explicit formula for the special values $\zeta_{CB}(k)$ at negative integers. 
Define two sequences of polynomials $(p_k(x))_{k \geq -1}$ and $(q_k(x))_{k \geq -1}$ by the initial values $p_{-1}(x) = 0, q_{-1}(x) = 1$ and the recursion
\begin{align}\label{p-q-def}
	\begin{split}
		p_{k+1}(x) &= 2(kx+1) p_k(x) + 2x(1-x) p'_k(x) + q_k(x),\\
		q_{k+1}(x) &= \left(2(k+1)x+1 \right) q_k(x) + 2x(1-x) q'_k(x).
	\end{split}
\end{align}
Then for $k \geq -1$, we have
\begin{align}\label{Lehmer-central-binomial}
	\sum_{n=1}^{\infty} \frac{(2n)^k(2x)^{2n}}{\binom{2n}{n}} = \frac{x}{(1-x^2)^{k+\frac{3}{2}}}\left(x\sqrt{1-x^2}p_k(x^2)+\arcsin(x) q_k(x^2)\right).
\end{align}
Consequently, 
\begin{align}\label{ZetaBC}
	\zeta_{CB}(-k) = \frac{1}{3} \left(\frac{2}{3} \right)^k p_k \left(\frac{1}{4} \right) + \frac{1}{3} \left(\frac{2}{3} \right)^{k+1} q_k \left(\frac{1}{4} \right) \frac{\pi}{\sqrt{3}} \in \Q + \Q \frac{\pi}{\sqrt{3}}.
\end{align}
The first few polynomials are: $p_0(x) =1$, $p_1(x) = 3$, $p_2(x) = 8x+7$; and $q_0(x) = 1$, $q_1(x) =2x+1$, $q_2(x) = 4x^2+10x+1$.

In 2004, Stephan~\cite[A098830]{OEIS} observed that the rational part of \eqref{ZetaBC} is nothing but (a third of) a sum of poly-Bernoulli numbers of negative indices. 
Poly-Bernoulli numbers $B_n^{(k)}$ are a generalization of classical Bernoulli numbers using polylogarithm functions and were introduced by Kaneko~\cite{Kaneko1997}. We will give a precise definition in \cref{s3}.
\begin{conjecture}\cite[stated by Kaneko, Stephan's conjecture]{Kaneko2008}\label{Stephan's conjecture}
	For any $n \geq 0$,
	\[
	\left(\frac{2}{3} \right)^n p_n \left(\frac{1}{4} \right) = \sum_{k=0}^n B_{n-k}^{(-k)}.
	\]
\end{conjecture}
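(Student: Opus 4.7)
The plan is to match exponential generating functions on both sides of the conjectured identity. Define
\[
A(u) := \sum_{n \geq 0} \Bigl(\tfrac{2}{3}\Bigr)^n p_n(1/4)\,\frac{u^n}{n!}, \qquad S(u) := \sum_{n \geq 0} \Biggl(\sum_{k=0}^n B_{n-k}^{(-k)}\Biggr)\frac{u^n}{n!},
\]
so that the conjecture is equivalent to $A(u) = S(u)$.

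First I would multiply Lehmer's identity \eqref{Lehmer-central-binomial} by $t^k/k!$ and sum over $k \geq 0$. The left-hand side collapses to $\sum_{n \geq 1} (2xe^t)^{2n}/\binom{2n}{n}$, which is evaluated by the $k = 0$ case of the same identity with $y = xe^t$. Specializing to $x = 1/2$ and setting $u = 2t$, the right-hand side becomes $\tfrac{1}{3} A(u) + \tfrac{2\pi}{9\sqrt{3}} B(u)$, where $B(u) = \sum_n (2/3)^n q_n(1/4)\, u^n/n!$. Next I would separate the rational and $\pi/\sqrt{3}$ parts: writing $\arcsin(e^{u/2}/2) = \pi/6 + \widetilde{R}(u)/\sqrt{3}$ and $(1 - e^u/4)^{1/2} = (\sqrt{3}/2)\,g(u)$ with $\widetilde{R}, g \in \Q[[u]]$, the left-hand side splits into a $\Q[[u]]$-part and a $(\pi/\sqrt{3}) \cdot \Q[[u]]$-part, and matching the former with $\tfrac{1}{3} A(u)$ yields an explicit closed form for $A(u)$.

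In parallel I would compute $S(u)$ starting from the Stirling expansion $B_n^{(-k)} = \sum_j (j!)^2 \sts{n+1}{j+1}\sts{k+1}{j+1}$, together with the standard generating function $\sum_a \sts{a}{j+1}\,v^a = v^{j+1}/\prod_{l=1}^{j+1}(1-lv)$; these yield at once the ordinary generating function $\sum_n s_n v^n = \sum_j (j!)^2 v^{2j}/\prod_{l=1}^{j+1}(1-lv)^2$, where $s_n := \sum_{k=0}^n B_{n-k}^{(-k)}$. Passing to the exponential generating function and simplifying should produce the same closed form obtained for $A(u)$ in the previous paragraph, thereby establishing $A(u) = S(u)$.

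The hard part will be the rational/$\pi/\sqrt{3}$ separation, which requires careful tracking of $\sqrt{3}$-factors in the field $\Q + \Q\cdot \pi/\sqrt{3}$, and the passage between the natural ordinary generating function for $s_n$ and an exponential generating function compatible with $A(u)$. An alternative, possibly cleaner route is to first establish an explicit formula for the polynomial $p_n(x)$ itself whose coefficients are poly-Bernoulli antidiagonal sums, after which Stephan's conjecture follows immediately by evaluating at $x = 1/4$ and absorbing $(2/3)^n$ into the combinatorial weights; in that approach, the main work shifts to verifying the claimed closed form of $p_n(x)$ directly against the recursion \eqref{p-q-def}.
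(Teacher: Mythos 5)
Your first half is sound and essentially reproduces what the paper does in \cref{Pxt-generating-function} and \eqref{eq:generating-function-an}: summing Lehmer's identity \eqref{Lehmer-central-binomial} against $t^k/k!$, collapsing the left side to a single known case, subtracting the $\arcsin$-term carried by $Q$, and specializing at $x=1/2$ does give the closed form
\[
\sum_{n\geq 0} \Bigl(\tfrac23\Bigr)^n p_n(1/4)\,\frac{t^{n+1}}{(n+1)!} \;=\; \frac{6 e^{t/2}\bigl(\arcsin(e^{t/2}/2)-\pi/6\bigr)}{(4-e^t)^{1/2}},
\]
and the $\pi/\sqrt3$ bookkeeping you worry about works out because $\tfrac{d}{dt}\arcsin(e^{t/2}/2)=e^{t/2}/(2\sqrt{4-e^t})$ lies in $\tfrac{1}{\sqrt3}\Q[[t]]$. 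The gap is in the second half. Your ordinary generating function $\sum_n s_n v^n=\sum_j (j!)^2 v^{2j}/\prod_{l=1}^{j+1}(1-lv)^2$ is correct (it is \eqref{eq:generating-function-bn}, due to Arakawa--Kaneko), but the step ``passing to the exponential generating function and simplifying should produce the same closed form'' is precisely where the entire difficulty of the conjecture sits, and you give no method for it. There is no routine transform that converts this infinite sum of rational functions with poles at $v=1/l$ into a closed-form exponential generating function, let alone one visibly equal to the $\arcsin$ expression above; if such a direct simplification existed, the conjecture would not have stood since 2004.

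The paper avoids the closed-form comparison altogether: it shows that \emph{both} sequences satisfy the same recurrence $3c_{n+1}=2c_n+\sum_{k=0}^n\binom{n+1}{k}c_k+3$ with $c_0=1$. On the $p_n(1/4)$ side this recurrence falls out of the first-order ODE $\bigl((4-e^t)\tfrac{d}{dt}-2\bigr)\tfrac32 P(\tfrac14,\tfrac23 t)=3e^t$ satisfied by the closed form you derived (\cref{an-rec-lem}). On the poly-Bernoulli side the recurrence is equivalent to a functional equation \eqref{key_equality} relating $\sum_j f_j(2-\tfrac1x,2-\tfrac1x)$ to $\sum_j f_j(3-\tfrac1x,3-\tfrac1x)$, where $f_j(x,y)=(j!)^2/((x)_j(y)_j)$, and proving that requires a genuinely new ingredient: a contiguity/telescoping identity among the $f_j$ (\cref{3F2-trans}) which is summed over $j$ twice to shift the parameters. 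Your proposal contains no analogue of this step, so as written it does not close. Your alternative route (an explicit formula for $p_n(x)$ with poly-Bernoulli coefficients) is also not supported by anything in the recursion \eqref{p-q-def}; the natural explicit formula for $p_n(x)$ is the convolution of bivariate Eulerian polynomials in \cref{explicit-P}, which does not specialize to poly-Bernoulli antidiagonal sums in any evident way. To repair the proof, replace the ``match closed forms'' endgame with ``match recurrences,'' and be prepared to prove an identity of the type of \cref{3F2-trans}.
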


In this article, we connect both polynomials $p_n(x)$ and $q_n(x)$ to known numbers and polynomials. 
More precisely, we prove Stephan's conjecture (relating this way $p_n(x)$ to the poly-Bernoulli numbers) using the fact that the polynomial sequence $q_n(x)$ is a generalization of the classical Eulerian polynomials.

\section{The polynomials $q_n(x)$ and bivariate Eulerian polynomials}

Eulerian polynomials have been studied by Euler himself. Since then they have been studied and became classical. Several extensions, generalizations and applications are known today. 

Let $\mathfrak{S}_n$ denote the set of permutations $\pi = \pi_1 \pi_2 \dots \pi_n$ of $[n] = \{1, 2, \dots, n\}$. For each $\pi\in \mathfrak{S}_n$, the excedance set is defined as $\mathrm{Exc}(\pi) = \{i \in [n] \mid \pi_i > i\}$. We set $\mathrm{exc}(\pi) = |\mathrm{Exc}(\pi)|$. It is well-known that the \emph{Eulerian number} $A(n,k)$ counts the number of permutations $\pi \in \mathfrak{S}_n$ with $\mathrm{exc}(\pi) = k$. For instance, $A(3,1) = 4$ because there are $4$ permutations of $\{1,2,3\}$ with $\mathrm{exc}(\pi) = 1$, namely, $132, 213, 312, 321$. 
A map $f: \mathfrak{S}_n \to \Z_{\geq 0}$ satisfying $|\{\pi \in \mathfrak{S}_n \mid f(\pi) = k\}| = A(n,k)$ is often called an \emph{Eulerian statistic}. The map $\mathrm{exc}$ is an example of Eulerian statistics. By Foata's fundamental transformation, it is also known that the number of permutations with $k$ excedances is the same as the number of permutations with $k$ descents, or equivalently formulated, with $k+1$ ascending runs, (see B\'{o}na's book~\cite{Bona2012}). 

The \emph{Eulerian polynomial} is defined  by
\[
	A_n(x) = \sum_{\pi \in \mathfrak{S}_n} x^{\mathrm{exc}(\pi)} = \sum_{k=0}^{n-1} A(n,k) x^k.
\]
The generating function of the Eulerian polynomials is given as
\begin{align*}
	\sum_{n=0}^{\infty} A_n(x)\frac{t^n}{n!} = \frac{1-x}{e^{t(x-1)}-x}.
\end{align*}

For a more detailed history and properties on the Eulerian numbers (polynomials) and Eulerian statistics, the articles~\cite{Bona2012, FoataSchutzenberger1970, Petersen2015} are good references.

We recall now a generalization of the Eulerian polynomial introduced by Foata--Sch\"{u}tzenberger~\cite[Chapter IV-3]{FoataSchutzenberger1970}.
Here we define a shifted version. Let $\mathrm{cyc}(\pi)$ denote the number of cycles in the disjoint cycle representation of $\pi \in \mathfrak{S}_n$.

\begin{definition}[Bivariate Eulerian polynomial]
	For any integer $n \geq 0$, let $F_0(x,y) = 1$ and define
	\[
	F_n(x,y) = \sum_{\pi \in \mathfrak{S}_n} x^{\mathrm{exc}(\pi)} y^{\mathrm{cyc}(\pi)}, \qquad (n > 0).
	\]
\end{definition}

\begin{example}
	We have $F_3(x,y) = y^3 + 3xy^2 + x^2y + xy$.
	\begin{table}[H]
  \centering
  \begin{tabular}{|c|cccccc|}
    \hline
    $\mathfrak{S}_3$ & $123 = (1)(2)(3)$ & $132 = (1)(23)$ & $213 = (12)(3)$ & $231 = (123)$ & $312 = (132)$ & $321 = (13)(2)$ \\
    \hline
    $\mathrm{exc}(\pi)$ & $0$ & $1$ & $1$ & $2$ & $1$ & $1$ \\
    $\mathrm{cyc}(\pi)$ & $3$ & $2$ & $2$ & $1$ & $1$ & $2$ \\
    \hline
  \end{tabular}
\caption{Permutations in $\mathfrak{S}_3$ with their weights.}
\end{table}
\end{example}

The generating function of the bivariate Eulerian polynomials is given by
\begin{align}\label{generating-Eulerian}
	\mathscr{F}(x,y; t) := \sum_{n=0}^{\infty} F_n(x,y)\frac{t^n}{n!} = \left(\frac{1-x}{e^{t(x-1)}-x}\right)^y.
\end{align}

Savage--Viswanathan~\cite{SavageViswanathan2012} derived several identities for the polynomials. Here we recall their recursion formula.

\begin{proposition}
	For $n \geq 0$,
	\[
	F_{n+1}(x,y) = \left(x(1-x) \frac{d}{dx} + nx + y \right) F_n(x,y),
	\]
	with the initial value $F_0(x,y)=1$. 
\end{proposition}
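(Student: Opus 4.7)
The plan is to prove the recursion bijectively, by describing how every permutation of $[n+1]$ arises from inserting the letter $n+1$ into a permutation of $[n]$, and then tracking the change in the statistics $\mathrm{exc}$ and $\mathrm{cyc}$.

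The first step is to observe that every $\pi' \in \mathfrak{S}_{n+1}$ is obtained uniquely from some $\pi \in \mathfrak{S}_n$ by exactly one of the following moves: (a) adjoin $n+1$ as a new singleton cycle, or (b) for some $i \in [n]$, insert $n+1$ directly after $i$ in the cycle of $\pi$ containing $i$ (so that $\pi'(i) = n+1$ and $\pi'(n+1) = \pi(i)$, with all other values unchanged). This gives the expected $(n+1)$-to-$1$ correspondence, since $(n+1)! = (n+1)\cdot n!$.

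The next step is to compute how $(\mathrm{exc},\mathrm{cyc})$ changes under each move. In case (a), $\mathrm{cyc}$ grows by $1$ and $\mathrm{exc}$ is unchanged, contributing a factor $y$. In case (b), $\mathrm{cyc}$ is unchanged. At position $n+1$ we have $\pi'(n+1) = \pi(i) \leq n < n+1$, so $n+1$ is never an excedance of $\pi'$; and at position $i$ we have $\pi'(i) = n+1 > i$, so $i$ is always an excedance of $\pi'$. Thus $\mathrm{exc}$ increases by $1$ exactly when $i \notin \mathrm{Exc}(\pi)$, and is unchanged when $i \in \mathrm{Exc}(\pi)$. For each $\pi$, this contributes a factor $1$ for each of the $\mathrm{exc}(\pi)$ excedance positions and a factor $x$ for each of the $n-\mathrm{exc}(\pi)$ non-excedance positions.

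Summing all contributions yields
\[
F_{n+1}(x,y) = \sum_{\pi \in \mathfrak{S}_n} x^{\mathrm{exc}(\pi)} y^{\mathrm{cyc}(\pi)}\bigl[\,y + \mathrm{exc}(\pi) + (n-\mathrm{exc}(\pi))x\,\bigr],
\]
which rearranges as
\[
F_{n+1}(x,y) = (nx + y)\,F_n(x,y) + (1-x)\sum_{\pi \in \mathfrak{S}_n} \mathrm{exc}(\pi)\, x^{\mathrm{exc}(\pi)} y^{\mathrm{cyc}(\pi)} = (nx+y)F_n(x,y) + x(1-x)\,\frac{d}{dx}F_n(x,y),
\]
using $\sum_{\pi}\mathrm{exc}(\pi)\,x^{\mathrm{exc}(\pi)}y^{\mathrm{cyc}(\pi)} = x\,\partial_x F_n(x,y)$. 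The only subtle point, and the main place to be careful, is the excedance bookkeeping in case (b): one must verify that inserting $n+1$ never creates an excedance at position $n+1$ (because $\pi(i)\leq n$), and always makes position $i$ an excedance, so that the net change in $\mathrm{exc}$ depends solely on whether $i$ was already an excedance. A generating-function verification via \eqref{generating-Eulerian} (differentiating $\mathscr{F} = \bigl((1-x)/(e^{t(x-1)}-x)\bigr)^y$ in $t$ and $x$ and using the algebraic relation between $\mathscr{F}^{1/y}$ and $e^{t(x-1)}$) is available as a cross-check, but the combinatorial route above is the most direct.
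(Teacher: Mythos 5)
Your proof is correct: the cycle-insertion decomposition is the standard one, the bookkeeping of $(\mathrm{exc},\mathrm{cyc})$ under each move is handled accurately (in particular the key observation that position $n+1$ never becomes an excedance while position $i$ always does, so the net change depends only on whether $i$ was already an excedance of $\pi$), and the identity $\sum_{\pi}\mathrm{exc}(\pi)\,x^{\mathrm{exc}(\pi)}y^{\mathrm{cyc}(\pi)} = x\,\partial_x F_n(x,y)$ correctly converts the weighted count into the differential operator. Note, however, that the paper itself gives no proof of this proposition: it is recalled from Savage--Viswanathan, where it is obtained by manipulating the closed-form generating function \eqref{generating-Eulerian} rather than by a bijection on permutations. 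So your route is genuinely different from the one the paper relies on. What your approach buys is a self-contained, elementary argument that explains \emph{why} each term of the operator $x(1-x)\frac{d}{dx}+nx+y$ appears ($y$ from new fixed points, $nx$ plus the correction $(1-x)x\partial_x$ from insertions into existing cycles), with no need to know the generating function in advance; what the generating-function route buys is brevity once \eqref{generating-Eulerian} is available, and it is the computation that the paper's later argument (the proof of Theorem 2.6, which verifies a differential equation for $\mathscr{F}(x,1/2;2t)$) effectively reuses. Either proof is acceptable here; yours could be inserted verbatim where the paper currently cites the result.
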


Note that by the definition, we have $F_n(x,1) = A_n(x)$. Moreover, for $y = r \in \Z_{\geq 2}$, the polynomials $F_n(x,r)$ are the $r$-Eulerian polynomials originally studied by Riordan~\cite{Riordan1958}. In addition, we have $F_{n+1}(x,-1) = -(x-1)^n$ and $F_{n+1}(1,y)= y(y+1)\cdots(y+n)$ for any $n \geq 0$. 

 The surprising fact is however that the values at $y=1/k$, for any positive integer $k$, have also nice combinatorial interpretations. 
Namely, for a sequence $\mathbf{s}=(s_i)_{i\geq 1}$ of positive integers, let the \emph{$\mathbf{s}$-inversion sequence} of length $n$ be defined as 
\begin{align*}
	I_n^{(\mathbf{s})} = \{(e_1,\ldots, e_n)\in \mathbb{Z}^n \mid 0\leq e_i<s_i \text{ for } 1\leq i\leq n\}.
\end{align*}
The ascent statistic on $e\in I_n^{(\mathbf{s})}$ is 
\begin{align*}
	\text{asc}(e) = \left|\left\{0\leq i<n:\frac{e_i}{s_i}<\frac{e_{i+1}}{s_{i+1}}\right\}\right|,
\end{align*}
with the convention that $e_0/s_0 = 0$.
Then the \emph{$\mathbf{s}$-Eulerian polynomials} are defined by
\begin{align*}
	E_n^{(\mathbf{s})}(x) = \sum_{e\in I_n^{(\mathbf{s})}}x^{\text{asc}(e)}.
\end{align*}
For more properties of the $\mathbf{s}$-Eulerian polynomials, see also Savage--Visontai~\cite{SavageVisontai2015}.
Note that for 
$\mathbf{s} = (i)_{i \geq 1} = (1, 2, 3, \dots)$, the $\mathbf{s}$-Eulerian polynomials are the classical Eulerian polynomials, $E_n^{(\mathbf{s})} = A_n(x)$. 
Savage--Viswanathan~\cite{SavageViswanathan2012} showed that for $\mathbf{s} = ((i-1)k+1)_{i \geq 1} = (1, k+1, 2k+1, 3k+1, \dots)$ where $k$ is a positive integer, it holds
\begin{align*}
	E_n^{(\mathbf{s})}(x) = k^nF_n \left(x,\frac{1}{k} \right).
\end{align*}
They called the coefficients in this special case the \emph{$1/k$-Eulerian numbers}. The $1/k$-Eulerian numbers play role in the theory of $k$-lecture hall polytopes~\cite{SavageViswanathan2012} and enumerate certain statistics in $k$-Stirling permutations~\cite{MAMansour2015}.
We now show that for $k=2$, the $E_n^{(1, 3, 5, \dots)}(x) = 2^n F_n(x, 1/2)$ is the same as the $q_n(x)$ polynomial sequence in Lehmer's identity.

\begin{table}[H]
	\centering
	\begin{tabular}{|c||c|c|} \hline
		$n$ & $2^n F_n(x,1/2)$ & $q_n(x)$ \\ \hline \hline
		$-1$ & $-$ & $1$ \\
		$0$ & $1$ & $1$ \\
		$1$ & $1$ & $2x+1$  \\
		$2$ & $2x+1$ & $4x^2+10x+1$ \\ 
		$3$ & $4x^2 + 10x + 1$ & $8x^3+60x^2+36x+1$ \\
		$4$ & $8x^3+60x^2+36x+1$ & $16x^4+296x^3+516x^2+116x+1$ \\
		$5$ & $16x^4 + 296x^3 + 516x^2 + 116x + 1$ & $\cdots$ \\\hline
	\end{tabular}
    \caption{The polynomials $2^n F_n(x,1/2)$ and $q_n(x)$.}
\end{table}

\begin{theorem}\label{Q-explicit}
	The generating function
	\[
	Q(x,t) := \sum_{n=0}^\infty q_{n-1}(x) \frac{t^n}{n!}
	\]
	equals $\mathscr{F}(x,1/2;2t)$, that is, $q_n(x) = 2^{n+1} F_{n+1}(x, 1/2)$ for any $n \geq -1$. 
\end{theorem}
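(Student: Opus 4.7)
My plan is to prove the pointwise identity $q_n(x) = 2^{n+1} F_{n+1}(x, 1/2)$ first, by induction on $n \geq -1$, and then deduce the generating function identity $Q(x,t) = \mathscr{F}(x, 1/2; 2t)$ by a trivial rescaling $t \mapsto 2t$. The base case $n = -1$ is immediate since $q_{-1}(x) = 1 = 2^{0} F_0(x, 1/2)$, using the normalization $F_0(x,y) = 1$ from the definition of the bivariate Eulerian polynomials.

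For the inductive step, the idea is that specializing the Foata--Sch\"utzenberger recursion at $y = 1/2$ and multiplying by a factor of $2^{n+2}$ produces exactly Lehmer's recursion for $q_{n+1}(x)$. Concretely, set $\tilde q_n(x) := 2^{n+1} F_{n+1}(x, 1/2)$. Applying the recursion
\[
F_{n+2}(x,y) = \left(x(1-x)\frac{d}{dx} + (n+1)x + y\right) F_{n+1}(x,y)
\]
at $y = 1/2$ and multiplying through by $2^{n+2}$, one finds
\[
\tilde q_{n+1}(x) = 2x(1-x)\, \tilde q_n'(x) + \bigl(2(n+1)x + 1\bigr) \tilde q_n(x),
\]
which is precisely the defining recursion \eqref{p-q-def} for $q_{n+1}(x)$. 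Combined with $\tilde q_{-1} = q_{-1}$, induction gives $\tilde q_n = q_n$ for all $n \geq -1$.

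Finally, to obtain the stated generating function equality, I would substitute the pointwise identity into the definition of $Q(x,t)$ and absorb the factor $2^n$ into the variable:
\[
Q(x,t) = \sum_{n=0}^\infty q_{n-1}(x)\, \frac{t^n}{n!} = \sum_{n=0}^\infty 2^n F_n(x, 1/2)\, \frac{t^n}{n!} = \sum_{n=0}^\infty F_n(x, 1/2)\, \frac{(2t)^n}{n!} = \mathscr{F}(x, 1/2; 2t),
\]
where the last equality is \eqref{generating-Eulerian} with $y = 1/2$ and $t$ replaced by $2t$.

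Honestly, there is no serious obstacle here: the whole proof is a matching of two first-order linear recursions with the same initial data. The only bookkeeping point to watch is the index shift ($q_n$ corresponds to $F_{n+1}$, not $F_n$) together with the factor $2^{n+1}$ that accounts for both the substitution $y = 1/2$ (producing the $+1$ in $2(k+1)x + 1$) and the rescaling of the derivative term. A stylistic alternative would be to derive a PDE for $Q(x,t)$ directly from the recursion and check that $\mathscr{F}(x, 1/2; 2t) = \bigl((1-x)/(e^{2t(x-1)} - x)\bigr)^{1/2}$ satisfies it, but the inductive argument above is shorter and avoids manipulating the explicit closed form.
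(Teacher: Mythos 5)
Your proof is correct, and it takes a genuinely different route from the paper's. The paper works at the level of generating functions: it translates Lehmer's recursion \eqref{p-q-def} into the partial differential equation $\left((2xt-1)\frac{d}{dt} + 2x(1-x)\frac{d}{dx} + 1\right)Q(x,t)=0$ with $Q(x,0)=1$, and then verifies by direct calculation that the closed form $\left((1-x)/(e^{2t(x-1)}-x)\right)^{1/2}=\mathscr{F}(x,1/2;2t)$ satisfies the same conditions. You instead work at the level of the polynomials themselves, matching Lehmer's recursion for $q_{n+1}$ against the Savage--Viswanathan recursion $F_{n+2}(x,y)=\left(x(1-x)\frac{d}{dx}+(n+1)x+y\right)F_{n+1}(x,y)$ at $y=1/2$ after rescaling by $2^{n+2}$; your algebra checks out, including the index shift and the base case $\tilde q_{-1}=q_{-1}=1$, and the final passage to $\mathscr{F}(x,1/2;2t)$ via \eqref{generating-Eulerian} is immediate. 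What your approach buys is that it needs only the recursion for $F_n(x,y)$ (stated as a proposition in the paper) and never touches the explicit closed form, so the verification is a one-line comparison of coefficients rather than a differentiation of a square-root expression; what the paper's approach buys is that it produces the differential equation for $Q(x,t)$ explicitly, which is in the same spirit as the later arguments for $P(x,t)$ and the recurrence for $a_n$. Either proof is acceptable; yours is arguably the more elementary of the two.
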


\begin{proof}
	By translating the recursion in \eqref{p-q-def}, the generating function $Q(x,t)$ is characterized by the differential equation
	\[
	\left((2xt-1) \frac{d}{dt} + 2x(1-x) \frac{d}{dx} + 1 \right) Q(x,t) = 0
	\]
	and the initial condition $Q(x,0) = 1$. We can check that the function
	\[
	\left(\frac{1-x}{e^{2t(x-1)}-x} \right)^{1/2} = \mathscr{F}(x,1/2; 2t) = \sum_{n=0}^\infty 2^n F_n (x, 1/2) \frac{t^n}{n!}
	\]
	satisfies these conditions by a direct calculation.
\end{proof}

The generating function $Q(x,t) = \mathscr{F}(x,1/2; 2t)$ tells us that the coefficients of $q_n(x)$ count perfect matchings with the restriction on the number of matching pairs have odd smaller entries (see~\cite{Ma2013} and \cite[A185411]{OEIS}) and $q_n(1)=(2n+1)!!$.

The relation between the polynomials $F_n(x,1/2)$ and $q_n(x)$  shed light on a proof of Stephan's conjecture which follows in the next section.

\section{The polynomials $p_n(x)$ and a proof of Stephan's conjecture} \label{s3}
In this section, we focus on the polynomial sequence $p_n(x)$ in the expression of Lehmer~\eqref{Lehmer-central-binomial}. We prove the observation of Stephan who noticed a relation of the sequence with the poly-Bernoulli numbers. Poly-Bernoulli numbers were introduced by Kaneko~\cite{Kaneko1997} by the polylogarithm function ($\mathrm{Li}_k(z) = \sum_{m=1}^\infty z^m/m^k$ for any integer $k$) as a generalization of the classical Bernoulli numbers. 
The \emph{poly-Bernoulli numbers} $B_n^{(k)} \in \Q$ are defined by
\[
	\sum_{n=0}^\infty B_n^{(k)} \frac{t^n}{n!} = \frac{\mathrm{Li}_k(1-e^{-t})}{1-e^{-t}}.
\]
Poly-Bernoulli numbers have attractive properties. In particular, the values with negative indices $k$ enumerate several combinatorial objects, (see for instance, \cite{BenyiHajnal2017, BenyiMatsusaka2021, Brewbaker2008, HMSY2021} and the references therein).

As one of the most basic properties, Arakawa and Kaneko~\cite{ArakawaKaneko1999} showed that
\[
	\sum_{k=0}^n (-1)^k B_{n-k}^{(-k)} = 0
\]
holds for any positive integer $n$. 
Since then, several authors have generalized the formula for the alternating anti-diagonal sum in~\cite{KanekoSakuraiTsumura2018, Matsusaka2020}, but not much is known about the anti-diagonal sum in~\cref{Stephan's conjecture}.

In most of the combinatorial interpretations, the roles of $n$ and $k$ are separately significant, hence it is not natural to consider the anti-diagonal sum. However, one of the interpretations, where this is natural, is the set of permutations with \emph{ascending-to-max property}~\cite{HeMunroRao2005}. A permutation $\pi \in \mathfrak{S}_n$ is called \emph{ascending-to-max}, if for any integer $i$, $1\leq i\leq n-2$
\begin{enumerate}
	\item[a.] if $\pi^{-1}(i)<\pi^{-1}(n)$ and $\pi^{-1}(i+1)<\pi^{-1}(n)$ then $\pi^{-1}(i)<\pi^{-1}(i+1)$, and
	\item[b.] if $\pi^{-1}(i)>\pi^{-1}(n)$ and  $\pi^{-1}(i+1)>\pi^{-1}(n)$, then  $\pi^{-1}(i)>\pi^{-1}(i+1)$. 
\end{enumerate}

In other words: record a permutation in one-line notation and draw an arrow from value $i$ to $i+1$ for each $i$. Then, the permutation has the ascending-to-max property if all the arrows starting from the left of $n$ point forward and all the arrows starting from an element to the right of $n$ point backward. For instance, $47518362$ has the property, but $41385762$ has not. 
It follows from the results of B\'{e}nyi and Hajnal \cite{BenyiHajnal2015} that the number of permutations $\pi \in \mathfrak{S}_{n+1}$ with the ascending-to-max property is given by the anti-diagonal sum $b_n = \sum_{k=0}^n B_{n-k}^{(-k)}$.
However, no explicit formula or recursion was known about the sequence $b_n$.

Our first result is a recursion for the sequence $b_n$.   
\begin{proposition}\label{bn-rec-lem}
	The sequence $(b_n)_{n \geq 0}$ satisfies the recurrence relation $b_0 = 1$ and 
	\begin{align}\label{cn-recur-b}
		3b_{n+1} = 2b_n + \sum_{k=0}^n {n+1 \choose k} b_k + 3.
	\end{align}
\end{proposition}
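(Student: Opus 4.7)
My plan is to convert the claimed recurrence into an equivalent functional equation for the ordinary generating function of $(b_n)$ and verify it using a Stirling-number closed form for the series. Let $\phi(t):=\sum_{n\ge 0}b_n t^n$. Using $\sum_{n\ge k}\binom{n}{k}t^n = t^k/(1-t)^{k+1}$ together with an index shift, a short computation gives
\[
\sum_{n\ge 0}\Bigl(\sum_{k=0}^n \binom{n+1}{k}b_k\Bigr)t^n = \frac{1}{t(1-t)}\,\phi\bigl(t/(1-t)\bigr)-\frac{\phi(t)}{t},
\]
so the recursion $3b_{n+1}=2b_n+\sum_{k=0}^n\binom{n+1}{k}b_k+3$ becomes, after clearing denominators, the functional equation
\[
(4-2t)(1-t)\,\phi(t) = 3 + \phi\bigl(t/(1-t)\bigr), \qquad \phi(0)=1.
\]

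To obtain a closed form for $\phi(t)$, I would substitute the Stirling-number expression $B_n^{(-k)}=\sum_{j\ge 0}(j!)^2\sts{n+1}{j+1}\sts{k+1}{j+1}$ into $b_n=\sum_{k=0}^n B_{n-k}^{(-k)}$ and use the classical identity $\sum_{m\ge 0}\sts{m+1}{j+1}t^m = t^j/\prod_{i=1}^{j+1}(1-it)$. Since $b_n$ is a Cauchy convolution in $k$, this yields
\[
\phi(t)=\sum_{j\ge 0}\frac{(j!)^2\,t^{2j}}{\prod_{i=1}^{j+1}(1-it)^2}.
\]
The substitution $t\mapsto t/(1-t)$ sends each factor $1-is$ to $(1-(i+1)t)/(1-t)$, shifting the product in the denominator by one index and contributing a compensating $(1-t)^{2}$ in the numerator, so
\[
\phi\bigl(t/(1-t)\bigr)=\sum_{j\ge 0}\frac{(j!)^2 t^{2j}(1-t)^2}{\prod_{i=2}^{j+2}(1-it)^2}.
\]

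The proof then reduces to verifying the functional equation as a formal identity in $\Q[\![t]\!]$, and this is the main obstacle. Writing $T_j$ and $U_j$ for the $j$-th summands of $\phi(t)$ and $\phi(t/(1-t))$ respectively, one checks directly that $U_j = (1-t)^4 T_j / (1-(j+2)t)^2$, so the identity becomes
\[
\sum_{j\ge 0} \frac{(j!)^2 t^{2j}(1-t)\bigl[(4-2t)(1-(j+2)t)^2 - (1-t)^3\bigr]}{\prod_{i=1}^{j+2}(1-it)^2} \;=\; 3.
\]
The bracketed polynomials $R_j(t) := (4-2t)(1-(j+2)t)^2 - (1-t)^3$ all satisfy $R_j(0) = 3$, and the corresponding summands do not cancel individually; instead one must group adjacent terms so that the higher-order contributions of the $j$-th summand are annihilated by the leading contributions of the $(j+1)$-th summand (I verified this telescoping through order $t^3$ in a check), leaving only the constant~$3$ coming from the $j = 0$ boundary. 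A combinatorial proof via ascending-to-max permutations of $[n+2]$ is in principle possible, but the factor of $3$ on the left-hand side of the recursion (and the isolated constant $+3$ on the right) make a direct bijective decomposition less transparent, so the generating-function route appears cleaner.
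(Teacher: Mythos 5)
Your reduction is sound and, in fact, coincides with the paper's own strategy up to a change of variables: the functional equation $(4-2t)(1-t)\phi(t)=3+\phi(t/(1-t))$ is exactly the paper's key identity \eqref{key_equality}, since the substitution $t\mapsto t/(1-t)$ sends $2-\tfrac1t$ to $3-\tfrac1t$ and hence $f_j(2-\tfrac1t,2-\tfrac1t)$-sums to $f_j(3-\tfrac1t,3-\tfrac1t)$-sums. The derivation of the binomial-convolution generating function, the closed form for $\phi$, and the algebra leading to
\[
\sum_{j\ge 0} \frac{(j!)^2 t^{2j}(1-t)R_j(t)}{\prod_{i=1}^{j+2}(1-it)^2} = 3, \qquad R_j(t)=(4-2t)\bigl(1-(j+2)t\bigr)^2-(1-t)^3,
\]
are all correct. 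The gap is the last step: you assert that the summands telescope and report a check "through order $t^3$," but a finite-order numerical check does not prove an identity of formal power series, and the telescoping witness is never exhibited. This identity \emph{is} the entire content of the proposition; everything before it is routine rearrangement. The paper closes this gap with \cref{3F2-trans}, a three-term contiguity relation among the $f_j$, applied twice.

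The good news is that your single-sum formulation does telescope explicitly, so the gap is fillable. Set
\[
S_j(t) := \frac{(j!)^2\, t^{2j}\,(1-t)\bigl(3-(2j+3)t\bigr)}{\prod_{i=1}^{j+1}(1-it)^2},
\]
so that $S_0(t)=3$ and $\operatorname{ord}_t S_j = 2j \to \infty$. The claim that the $j$-th summand above equals $S_j(t)-S_{j+1}(t)$ reduces, after clearing the common denominator, to the polynomial identity
\[
R_j(t) \;=\; \bigl(3-(2j+3)t\bigr)\bigl(1-(j+2)t\bigr)^2 \;-\; (j+1)^2 t^2\bigl(3-(2j+5)t\bigr),
\]
which one verifies by comparing coefficients of $1,t,t^2,t^3$ (both sides equal $1-3t-3j(j+2)t^2+(2j+1)(j+2)^2t^3$ after subtracting $(1-t)^3$ appropriately; a direct expansion confirms it). With this lemma in place, $\sum_j V_j = S_0 = 3$ and your argument is complete. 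This witness plays precisely the role of the paper's \cref{3F2-trans}; without it (or an equivalent), the proposal does not constitute a proof.
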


In order to prove this theorem, we need some preparations. 
 Recall that by~\cite[p.163]{ArakawaKaneko1999}, we have
\begin{align}\label{eq:generating-function-bn}
	\sum_{n=0}^\infty b_n x^n &= \sum_{j=0}^\infty \frac{(j!)^2 x^{2j}}{(1-x)^2 (1-2x)^2 \cdots (1-(j+1)x)^2} = \frac{1}{(1-x)^2} \sum_{j=0}^\infty f_j \left(2-\frac{1}{x}, 2-\frac{1}{x} \right),
\end{align}
where $(x)_j = x(x+1)(x+2) \cdots (x+j-1)$ is the Pochhammer symbol and we put
\[
f_j(x,y) = \frac{(j!)^2}{(x)_j (y)_j}.
\]
By a direct calculation, we have
\begin{align*}
	\sum_{n=0}^\infty \sum_{k=0}^n &{n+1 \choose k} b_k x^n = \sum_{k=0}^\infty b_k x^k \sum_{n=0}^\infty {n+k+1 \choose k} x^n = \sum_{k=0}^\infty b_k x^{k-1} \left(\frac{1}{(1-x)^{k+1}} - 1 \right)\\
	&= \frac{1-x}{x(1-2x)^2} \sum_{j=0}^\infty f_j \left(3-\frac{1}{x}, 3-\frac{1}{x} \right) - \frac{1}{x(1-x)^2} \sum_{j=0}^\infty f_j \left(2-\frac{1}{x}, 2- \frac{1}{x} \right).
\end{align*}
Thus, the desired recursion in~\eqref{cn-recur-b} is equivalent to
\begin{align}\label{key_equality}
	\frac{2(2-x)}{(1-x)^2} \sum_{j=0}^\infty f_j \left(2-\frac{1}{x}, 2-\frac{1}{x} \right) = \frac{3}{1-x} + \frac{1-x}{(1-2x)^2} \sum_{j=0}^\infty f_j \left(3-\frac{1}{x}, 3-\frac{1}{x} \right).
\end{align}
To prove \eqref{key_equality}, we derive a useful equation.
\begin{lemma}\label{3F2-trans}
	For any $j \in \Z_{\geq 0}$, we have
	\begin{align*}
		&(x-1)(x-2) \left(f_j(x-2,y) - f_{j-1}(x-2,y) \right) + (x-1)(2x-5) f_{j-1}(x-1, y)\\
		&-(x-1)(x-y-1)f_j(x-1,y) - (x-2)^2 f_{j-1}(x,y)\\
		&= \begin{cases}
			(x-1)(y-1) &\text{if } j= 0,\\
			0 &\text{if } j > 0,
		\end{cases}
	\end{align*}
	where we put $f_{-1}(x,y) = 0$.
\end{lemma}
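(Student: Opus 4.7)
The plan is to handle $j=0$ by direct substitution and to dispose of the case $j \geq 1$ by clearing all Pochhammer denominators, thereby reducing the statement to a polynomial identity that collapses via a single elementary factorization.

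For $j = 0$, I use $f_0 \equiv 1$ and $f_{-1} \equiv 0$, so the left-hand side immediately collapses to $(x-1)(x-2) - (x-1)(x-y-1) = (x-1)(y-1)$, matching the right-hand side.

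For $j \geq 1$, my strategy is to multiply the entire identity through by $(x-1)_j (y)_j$. Using the elementary Pochhammer ratios
\[
\frac{(x-1)_j}{(x-2)_j} = \frac{x+j-2}{x-2}, \qquad \frac{(x-1)_j}{(x-2)_{j-1}} = \frac{(x+j-3)(x+j-2)}{x-2}, \qquad \frac{(x-1)_j}{(x-1)_{j-1}} = x+j-2,
\]
\[
\frac{(x-1)_j}{(x)_{j-1}} = x-1, \qquad \frac{(y)_j}{(y)_{j-1}} = y+j-1,
\]
together with $(j!)^2 = j^2 ((j-1)!)^2$, each of the five summands rewrites as an explicit polynomial in $x,y,j$ times the common factor $(x-1)((j-1)!)^2$. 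After dividing by this factor, the identity to verify becomes
\[
j^2(x+j-2) - (y+j-1)(x+j-3)(x+j-2) + (2x-5)(x+j-2)(y+j-1) - j^2(x-y-1) - (x-2)^2(y+j-1) = 0.
\]
Collecting the two $j^2$-terms yields $j^2(y+j-1)$; combining the middle two via $-(x+j-3) + (2x-5) = x-j-2$ yields $(y+j-1)(x+j-2)(x-j-2)$. The left-hand side thus factors as $(y+j-1)\bigl[j^2 + (x+j-2)(x-j-2) - (x-2)^2\bigr]$, and the bracket vanishes via $(x+j-2)(x-j-2) = (x-2)^2 - j^2$.

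The only delicate part is the bookkeeping of Pochhammer shifts across the five distinct summands; once the ratios above are written down, the remaining algebra collapses transparently, so I do not anticipate a serious obstacle beyond keeping signs and shifts straight.
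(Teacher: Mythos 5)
Your proof is correct, and it takes the same route as the paper, which simply states ``By direct calculation, one can verify it'' without supplying details. Your clearing of denominators by $(x-1)_j(y)_j$, the five Pochhammer ratios, and the final factorization $(x+j-2)(x-j-2)=(x-2)^2-j^2$ all check out, so your write-up is a fully worked version of the paper's asserted direct computation.
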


\begin{proof}
	By direct calculation, one can verify it.
\end{proof}

\begin{proof}[Proof of \cref{bn-rec-lem}]
	We prove \eqref{key_equality}. Setting $x \rightarrow 3-1/x$ and $y\rightarrow 2-1/x$ and applying \cref{3F2-trans}, we obtain
	\begin{align*}
		f_j \left(2-\frac{1}{x}, 2-\frac{1}{x} \right) &= \frac{(1-x)^2}{(1-2x)(2-x)} f_j \left(3-\frac{1}{x}, 2-\frac{1}{x} \right)\\
		&\quad - \frac{1-x}{2-x} \left(f_{j+1} \left(1-\frac{1}{x}, 2-\frac{1}{x} \right) - f_j \left(1-\frac{1}{x}, 2- \frac{1}{x} \right) \right).
	\end{align*}
	Summing up both sides over $j = 0, 1, 2, \dots$, we have
	\begin{align*}
		\sum_{j=0}^\infty f_j \left(2-\frac{1}{x}, 2- \frac{1}{x} \right) = \frac{1-x}{2-x} + \frac{(1-x)^2}{(1-2x)(2-x)} \sum_{j=0}^\infty f_j \left(3-\frac{1}{x}, 2- \frac{1}{x} \right).
	\end{align*}
	From \cref{3F2-trans} again for $x \to 3-1/x$ and $y \to 3-1/x$, we conclude \eqref{key_equality}.
\end{proof}
\begin{remark}
	Unfortunately, we could not provide a combinatorial proof for this recurrence, though it would be very interesting to find one using for instance the permutations with the ascending-to-max property. 
\end{remark}

To relate the sequence $(b_n)_{n \geq 0}$ to the polynomial sequence $(p_n(x))_{n \geq -1}$, we next derive the generating function for $p_n(x)$.

\begin{proposition}\label{Pxt-generating-function}
	We have
	\begin{align*}
	P(x,t) &:= \sum_{n=0}^\infty p_{n-1}(x) \frac{t^n}{n!} = \frac{e^{(1-x)t} \left(\arcsin(x^{1/2} e^{(1-x)t}) - \arcsin(x^{1/2}) \right)}{x^{1/2} (1-x e^{2(1-x)t})^{1/2}}.
\end{align*}
\end{proposition}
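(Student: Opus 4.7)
The plan is to avoid solving a PDE directly and instead ``integrate'' Lehmer's identity \eqref{Lehmer-central-binomial} in the parameter $k$. Substitute $x \mapsto \sqrt{x}$ in \eqref{Lehmer-central-binomial} so that the RHS reads
\[
\frac{x\, p_k(x)}{(1-x)^{k+1}} + \frac{\sqrt{x}\,\arcsin(\sqrt{x})\, q_k(x)}{(1-x)^{k+3/2}}.
\]
Then I would multiply through by $t^{k+1}/(k+1)!$ and sum over $k \geq -1$; the index $k=-1$ contributes nothing on the $p$-side since $p_{-1}(x)=0$, while on the $q$-side it is allowed because $q_{-1}(x)=1$.

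Next I would perform the main computation, which is to swap the order of summation on the left-hand side:
\[
\sum_{k=-1}^\infty \frac{t^{k+1}}{(k+1)!} \sum_{n=1}^\infty \frac{(2n)^k (2\sqrt{x})^{2n}}{\binom{2n}{n}} = \sum_{n=1}^\infty \frac{(2\sqrt{x})^{2n}}{2n\binom{2n}{n}} \sum_{k=-1}^\infty \frac{(2nt)^{k+1}}{(k+1)!} = \sum_{n=1}^\infty \frac{(2\sqrt{x}\,e^{t})^{2n}}{2n\binom{2n}{n}}.
\]
Applying Lehmer's closed form $\sum_{n\geq 1} (2z)^{2n}/(n\binom{2n}{n}) = 2z\arcsin(z)/\sqrt{1-z^2}$ to $z = \sqrt{x}\,e^{t}$, the left-hand side becomes $\sqrt{x}\,e^{t}\arcsin(\sqrt{x}\,e^{t})/\sqrt{1-xe^{2t}}$.

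On the right-hand side, the factor $t^{k+1}/(1-x)^{k+1}$ combines into $(t/(1-x))^{k+1}$, so after setting $u = t/(1-x)$ the two sums become exactly $P(x,u)$ and $(1-x)^{-1/2}Q(x,u)$. Equating and solving for $P(x,u)$ yields
\[
P(x,u) = \frac{e^{(1-x)u}\arcsin(\sqrt{x}\,e^{(1-x)u})}{\sqrt{x}\sqrt{1-xe^{2(1-x)u}}} - \frac{\arcsin(\sqrt{x})\,Q(x,u)}{\sqrt{x}\sqrt{1-x}}.
\]
The last step is to invoke \cref{Q-explicit}, which gives $Q(x,u) = ((1-x)/(e^{2u(x-1)}-x))^{1/2}$; a short manipulation with $\alpha = e^{(1-x)u}$ shows that $Q(x,u)/\sqrt{1-x} = \alpha/\sqrt{1-x\alpha^2} = e^{(1-x)u}/\sqrt{1-xe^{2(1-x)u}}$, and the two fractions then combine into the claimed closed form.

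The computation is essentially routine once the right manipulation is set up; the only subtlety I anticipate is bookkeeping the shift in the index of summation so that the $k=-1$ term supplies precisely the $1/(2n)$ factor needed to invoke Lehmer's evaluation of $\sum (2z)^{2n}/(n\binom{2n}{n})$, and keeping track of the substitution $t \leftrightarrow u(1-x)$ when reassembling $P(x,t)$ from $P(x,t/(1-x))$.
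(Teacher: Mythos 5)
Your proposal is correct and follows essentially the same route as the paper's proof: both sum Lehmer's identity \eqref{Lehmer-central-binomial} against $t^{k+1}/(k+1)!$, swap the order of summation to recognize the $k=-1$ evaluation at $xe^{t}$ on the left, and then eliminate $Q$ via \cref{Q-explicit}. The only differences are cosmetic (you substitute $x\mapsto\sqrt{x}$ at the outset rather than working with $x^{2}$ throughout, and you index from $k=-1$ rather than shifting to $(2n)^{k-1}$).
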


\begin{proof}
	From \eqref{Lehmer-central-binomial}, we have
	\begin{align*}
		\sum_{k=0}^\infty &\sum_{n=1}^\infty \frac{(2n)^{k-1} (2x)^{2n}}{{2n \choose n}} \frac{t^k}{k!} = \frac{x}{(1-x^2)^{1/2}} \left(x \sqrt{1-x^2} P \left(x^2, \frac{t}{1-x^2} \right) + \arcsin(x) Q \left(x^2, \frac{t}{1-x^2} \right) \right).
	\end{align*}
	By applying \eqref{Lehmer-central-binomial} with $k=-1$ again, the left-hand side equals
	\begin{align*}
		\sum_{n=1}^\infty \frac{(2n)^{-1} (2x e^t)^{2n}}{{2n \choose n}} = \frac{x e^t \arcsin(x e^t)}{(1-x^2 e^{2t})^{1/2}}.
	\end{align*}
	Thus, Combining with \cref{Q-explicit}, we have
	\begin{align*}
		P\left(x^2, \frac{t}{1-x^2} \right) &= \frac{e^t \arcsin(x e^t)}{x(1-x^2 e^{2t})^{1/2}} - \frac{\arcsin(x)}{x\sqrt{1-x^2}} \mathscr{F} \left(x^2, \frac{1}{2}; \frac{2t}{1-x^2} \right)\\
		&= \frac{e^t \left(\arcsin(xe^t) - \arcsin(x) \right)}{x (1-x^2 e^{2t})^{1/2}},
	\end{align*}
	which implies the claim.
\end{proof}

We define the sequence $a_n$ as special values of $p_n(x)$,
\begin{align}\label{an-def}
	a_n = \left(\frac{2}{3} \right)^n p_n \left(\frac{1}{4} \right).
\end{align}

Using the generating function of $p_n(x)$, we obtain the recurrence formula that the sequence $(a_n)_{n \geq 0}$ satisfies.

\begin{proposition}\label{an-rec-lem}
	The sequence $(a_n)_{n \geq 0}$ defined in \eqref{an-def} satisfies $a_0 = 1$ and
	\begin{align*}
		3a_{n+1} = 2a_n + \sum_{k=0}^n {n+1 \choose k} a_k + 3.
	\end{align*}
\end{proposition}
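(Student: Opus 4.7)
The plan is to convert the claimed recurrence into a first order linear ODE for a generating function and then verify that the closed form supplied by \cref{Pxt-generating-function} satisfies it.

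\textbf{Step 1: Encode the recurrence as an ODE.} Set
\[
\tilde A(t) = \sum_{n=0}^{\infty} a_n \frac{t^n}{n!}, \qquad A(t) = \sum_{n=0}^{\infty} a_{n-1} \frac{t^n}{n!},
\]
with the convention $a_{-1} = 0$ (forced by $p_{-1}(x) = 0$). Then $A(0) = 0$ and $A'(t) = \tilde A(t)$. Multiplying the recurrence $3a_{n+1} - 2a_n - \sum_{k=0}^n \binom{n+1}{k}a_k - 3 = 0$ by $t^{n+1}/(n+1)!$ and summing over $n \geq 0$, each piece has a clean EGF interpretation: $\sum 3a_{n+1}\tfrac{t^{n+1}}{(n+1)!} = 3(\tilde A - 1)$, $\sum 2a_n \tfrac{t^{n+1}}{(n+1)!} = 2A$, and the binomial convolution, being $e^t\tilde A$ shifted and stripped of its diagonal term, contributes $\tilde A(e^t-1)$. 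The constant term gives $3(e^t - 1)$. Collecting yields the equivalent ODE
\[
(4 - e^t)\,A'(t) - 2A(t) = 3 e^t, \qquad A(0) = 0,
\]
which has a unique analytic solution near $t = 0$ since $4 - e^t$ does not vanish there.

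\textbf{Step 2: Extract the closed form of $A(t)$ from \cref{Pxt-generating-function}.} From the definition of $a_n$ one has $A(t) = \tfrac{3}{2}P\!\left(\tfrac14, \tfrac{2t}{3}\right)$. Substituting $x = 1/4$ (so $1 - x = 3/4$) into the formula for $P$ and introducing the convenient variable $u = e^{t/2}/2$, a short simplification using $x^{1/2} = 1/2$, $\arcsin(x^{1/2}) = \pi/6$ and $1 - x e^{2(1-x)\cdot 2t/3} = 1 - u^2$ gives
\[
A(t) = \frac{6u\bigl(\arcsin u - \pi/6\bigr)}{\sqrt{1 - u^2}}.
\]

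\textbf{Step 3: Verify the ODE.} Since $du/dt = u/2$ and $(\arcsin u)' = 1/\sqrt{1-u^2}$, differentiating the closed form gives, after combining the two $(1-u^2)^{-3/2}$ terms,
\[
A'(t) = \frac{3u\,(\arcsin u - \pi/6)}{(1-u^2)^{3/2}} + \frac{3u^2}{1-u^2}.
\]
Using $e^t = 4u^2$ so that $4 - e^t = 4(1 - u^2)$, multiplication yields
\[
(4 - e^t)A'(t) = \frac{12u\,(\arcsin u - \pi/6)}{\sqrt{1-u^2}} + 12u^2 = 2A(t) + 3 e^t,
\]
and $A(0) = 0$ holds because $u|_{t=0} = 1/2$ makes $\arcsin u - \pi/6$ vanish. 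Thus $A(t)$ solves the ODE from Step 1, so the $a_n$ satisfy the recurrence, with $a_0 = p_0(1/4) = 1$ providing the initial value.

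\textbf{Expected difficulty.} The calculus in Step 3 is mechanical; the slightly delicate point is Step 1, where one has to recognise the binomial convolution as an EGF product with $e^t$ with the correct off-by-one bookkeeping (the upper summation index is $n$, not $n+1$), and to introduce the antiderivative $A$ in order to turn the $a_n$ term into something compatible with $a_{n+1}$ and $\tilde A$. Everything else is a transparent check.
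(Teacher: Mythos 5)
Your proof is correct and follows essentially the same route as the paper: both specialize \cref{Pxt-generating-function} at $x=1/4$, $t\mapsto 2t/3$ and show the resulting generating function satisfies the ODE $\left((4-e^t)\frac{d}{dt}-2\right)A(t)=3e^t$, which encodes the recurrence. You merely make explicit the two verifications the paper leaves as "a direct calculation."
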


\begin{proof}
	By \cref{Pxt-generating-function}, the generating function for $a_n$ is given by
	\begin{align}\label{eq:generating-function-an}
		\sum_{n=0}^\infty a_n \frac{t^{n+1}}{(n+1)!} = \frac{3}{2} P \left(\frac{1}{4}, \frac{2}{3}t \right) = \frac{6 e^{t/2} \left(\arcsin(e^{t/2}/2) - \arcsin(1/2) \right)}{(4- e^t)^{1/2}}.
	\end{align}
	Since this function satisfies the differential equation
	\[
		\left((4-e^t) \frac{d}{dt} - 2 \right) \frac{3}{2}P \left(\frac{1}{4}, \frac{2}{3}t \right) = 3e^t,
	\]
	the coefficients $a_n$ satisfy the desired recurrence formula.
\end{proof}

In conclusion, we have the main theorem.

\begin{theorem}\label{theo:Stephan}
	$\cref{Stephan's conjecture}$ is true, i.e.
	for any $n \geq 0$,
	\[
	\left(\frac{2}{3} \right)^n p_n \left(\frac{1}{4} \right) = \sum_{k=0}^n B_{n-k}^{(-k)}.
	\]
		
\end{theorem}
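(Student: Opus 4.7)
The plan is to observe that the theorem is essentially an immediate corollary of the two propositions already established. Setting $a_n = (2/3)^n p_n(1/4)$ and $b_n = \sum_{k=0}^n B_{n-k}^{(-k)}$, the content of the theorem is the identity $a_n = b_n$ for all $n \geq 0$. Proposition \ref{bn-rec-lem} shows that the sequence $(b_n)$ satisfies $b_0 = 1$ together with the linear recurrence
\[
3b_{n+1} = 2b_n + \sum_{k=0}^n \binom{n+1}{k} b_k + 3,
\]
and Proposition \ref{an-rec-lem} shows that $(a_n)$ satisfies exactly the same initial condition and the same recurrence. Since this recurrence expresses the $(n{+}1)$-st term as a linear function of $b_0, \dots, b_n$ (or $a_0, \dots, a_n$) plus a constant, the sequence is uniquely determined by its value at $n=0$.

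Concretely, I would argue by induction on $n$. The base case $a_0 = 1 = b_0$ is immediate. For the inductive step, assume $a_k = b_k$ for all $k \leq n$; then
\[
3a_{n+1} = 2a_n + \sum_{k=0}^n \binom{n+1}{k} a_k + 3 = 2b_n + \sum_{k=0}^n \binom{n+1}{k} b_k + 3 = 3b_{n+1},
\]
so $a_{n+1} = b_{n+1}$, closing the induction.

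The entire difficulty of the proof has already been discharged in the preceding work: deriving the recurrence for $b_n$ from the hypergeometric-type generating function \eqref{eq:generating-function-bn} via the transformation in \cref{3F2-trans}, and deriving the matching recurrence for $a_n$ from the closed form of $P(x,t)$ in \cref{Pxt-generating-function}, which in turn relied on \cref{Q-explicit} identifying $q_n(x)$ with a rescaled bivariate Eulerian polynomial. Thus the only remaining step to write is the trivial induction above, and no further obstacle arises.
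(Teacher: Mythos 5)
Your proposal is correct and matches the paper's own proof, which likewise deduces the theorem directly from \cref{bn-rec-lem} and \cref{an-rec-lem}; you have merely spelled out the (trivial) induction that the paper leaves implicit.
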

\begin{proof}
	\cref{bn-rec-lem} and \cref{an-rec-lem} imply the theorem.
\end{proof}
In the course of our proof, we obtain two types of generating functions in \eqref{eq:generating-function-bn} and \eqref{eq:generating-function-an} for the sequences $(a_n)_{n \geq 0} = (b_n)_{n \geq 0}$.
As a corollary, we have an explicit formula for the anti-diagonal sum, (see~\cite[p.24]{BenyiHajnal2017}).
\begin{corollary}
	\[
	b_n = \sum_{k=0}^{n}B_{n-k}^{(-k)}= \frac{(-1)^{n+1}}{2}\sum_{j=1}^{n+1}(-1)^j
	j!\sts{n+1}{j}\frac{\binom{2j}{j}}{3^{j-1}} \sum_{i=0}^{j-1}\frac{3^i}{(2i+1)
		\binom{2i}{i}}.
	\]
\end{corollary}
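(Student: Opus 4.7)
The plan is to rewrite the generating function \eqref{eq:generating-function-an} as a power series in $u=1-e^{-t}$ and then translate back to $t$ using the classical Stirling expansion
\[
\frac{(1-e^{-t})^j}{j!}=\sum_{n\ge j}(-1)^{n-j}\sts{n}{j}\frac{t^n}{n!}.
\]
Under the substitution $e^{-t}=1-u$ we have $e^{t/2}=(1-u)^{-1/2}$ and $4-e^t=(3-4u)/(1-u)$, so the factor $e^{t/2}/\sqrt{4-e^t}$ collapses to $1/\sqrt{3-4u}$ and \eqref{eq:generating-function-an} becomes
\[
\tilde{G}(u):=G(t)=\frac{6\bigl(\arcsin(1/(2\sqrt{1-u}))-\pi/6\bigr)}{\sqrt{3-4u}},
\]
which is analytic at $u=0$ with $\tilde{G}(0)=0$.

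Next I would derive a first-order ODE for $\tilde{G}(u)$. A short computation yields $\frac{d}{du}\arcsin(1/(2\sqrt{1-u}))=\frac{1}{2(1-u)\sqrt{3-4u}}$, and differentiating the quotient above then gives
\[
(3-4u)\tilde{G}'(u)=\frac{3}{1-u}+2\tilde{G}(u).
\]
Writing $\tilde{G}(u)=\sum_{j\ge 1}g_j u^j$ and matching coefficients forces $g_1=1$ together with the recursion $3(j+1)g_{j+1}=(4j+2)g_j+3$. Setting $c_j:=\frac{\binom{2j}{j}}{3^{j-1}}\sum_{i=0}^{j-1}\frac{3^i}{(2i+1)\binom{2i}{i}}$, an induction (using $\binom{2j+2}{j+1}=\frac{2(2j+1)}{j+1}\binom{2j}{j}$) shows that $g_j=c_j/2$; the new summand $\frac{3^j}{(2j+1)\binom{2j}{j}}$ in $c_{j+1}$ is exactly what produces the additive $1/(j+1)$ in the recurrence for $g_{j+1}$.

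Finally, inserting this expansion into the Stirling identity gives
\[
G(t)=\frac{1}{2}\sum_{j\ge 1}c_j(1-e^{-t})^j=\sum_{n\ge 0}\frac{t^{n+1}}{(n+1)!}\cdot\frac{(-1)^{n+1}}{2}\sum_{j=1}^{n+1}(-1)^j j!\sts{n+1}{j}c_j,
\]
and comparing with $G(t)=\sum_{n\ge 0}b_n\frac{t^{n+1}}{(n+1)!}$ yields the claimed closed form for $b_n$. The main technical step is the inductive verification that the coefficients $g_j$ produced by the ODE coincide with $c_j/2$; once this algebraic identification is in hand, the passage from $u$-coefficients to $t$-coefficients through the Stirling expansion is automatic.
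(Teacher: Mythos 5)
Your proof is correct, and it takes a genuinely different route from the paper. The paper's proof of this corollary is a one-line citation: it invokes the explicit formula of Borwein--Girgensohn for the central binomial series at negative integers and combines it with \cref{theo:Stephan}. You instead derive the closed form self-containedly from the generating function \eqref{eq:generating-function-an}: the substitution $u=1-e^{-t}$ does collapse $e^{t/2}/\sqrt{4-e^t}$ to $1/\sqrt{3-4u}$, the ODE $(3-4u)\tilde{G}'(u)=\tfrac{3}{1-u}+2\tilde{G}(u)$ checks out (it is the $u$-coordinate version of the differential equation the paper uses in \cref{an-rec-lem}), the resulting recursion $3(j+1)g_{j+1}=(4j+2)g_j+3$ with $g_1=1$ is indeed solved by $g_j=c_j/2$ via $3(j+1)c_{j+1}=(4j+2)c_j+6$, and the passage back to $t$ through $\frac{(1-e^{-t})^j}{j!}=\sum_{n\ge j}(-1)^{n-j}\sts{n}{j}\frac{t^n}{n!}$ reproduces the stated alternating Stirling-number sum exactly (I verified $n=0,1,2$ give $b_n=1,2,4$). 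What your approach buys is independence from the external reference --- in effect you re-prove the relevant special case of the Borwein--Girgensohn formula --- and it makes transparent where the inner sum $\sum_{i=0}^{j-1}\frac{3^i}{(2i+1)\binom{2i}{i}}$ comes from, namely the inhomogeneous term $3/(1-u)$ in the ODE. What the paper's approach buys is brevity, plus an explicit conceptual link back to the central binomial series literature.
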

\begin{proof}
	The result follows from the explicit formula by Borwein--Girgensohn~\cite{BorweinGirgensohn2005} and \cref{theo:Stephan}.
\end{proof}

As a final remark, we show that the polynomial $p_n(x)$ can also be expressed in terms of bivariate Eulerian polynomials.

\begin{theorem}\label{explicit-P}
	For any $n \geq 0$, we have
	\[
		p_n(x) = 2^n \sum_{k=0}^n {n+1 \choose k} F_{n-k}(x,1/2) F_k(x,1/2).
	\]
\end{theorem}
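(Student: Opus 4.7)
The plan is to prove the identity at the level of exponential generating functions. Using \cref{Q-explicit} and \cref{Pxt-generating-function}, both sides of the claim have clean EGF expressions, and the binomial convolution on the right side is exactly what appears when we multiply one EGF by an antiderivative of another. Concretely, I would aim to show the operator-level identity
\[
P(x,t) = Q(x,t) \int_0^t Q(x,s)\, ds.
\]

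The first step is to observe, from \cref{Q-explicit}, that
\[
Q(x,t) = \mathscr{F}(x,1/2;2t) = \sum_{n \geq 0} 2^n F_n(x,1/2) \frac{t^n}{n!},
\]
so that the primitive is
\[
\int_0^t Q(x,s)\, ds = \sum_{m \geq 0} 2^m F_m(x,1/2) \frac{t^{m+1}}{(m+1)!}.
\]
Performing the Cauchy product and re-indexing with $n = k+m$ gives
\[
Q(x,t) \int_0^t Q(x,s)\, ds = \sum_{n \geq 0} \frac{t^{n+1}}{(n+1)!} \cdot 2^n \sum_{k=0}^n \binom{n+1}{k} F_k(x,1/2) F_{n-k}(x,1/2).
\]
On the other hand, since $p_{-1}(x) = 0$, we have $P(x,t) = \sum_{n \geq 0} p_n(x) \frac{t^{n+1}}{(n+1)!}$. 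Thus, once the displayed identity above is established, comparing coefficients of $t^{n+1}/(n+1)!$ (and using the obvious symmetry $k \leftrightarrow n-k$) yields the desired formula.

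The remaining task is the analytic check $P(x,t) = Q(x,t) \int_0^t Q(x,s)\,ds$. Here I would rewrite
\[
Q(x,t) = \frac{(1-x)^{1/2} e^{(1-x)t}}{(1 - x e^{2(1-x)t})^{1/2}}
\]
by multiplying numerator and denominator inside the square root by $e^{2(1-x)t}$, and then compute the antiderivative by the substitution $u = x^{1/2} e^{(1-x)s}$, which gives $du = x^{1/2}(1-x) e^{(1-x)s}\,ds$ and reduces the integrand to $du/\bigl(x^{1/2}(1-x)^{1/2}(1-u^2)^{1/2}\bigr)$. This produces
\[
\int_0^t Q(x,s)\, ds = \frac{\arcsin\!\bigl(x^{1/2} e^{(1-x)t}\bigr) - \arcsin(x^{1/2})}{x^{1/2}(1-x)^{1/2}},
\]
and multiplying this by $Q(x,t)$ cancels the factors $(1-x)^{1/2}$ and reproduces exactly the closed form of $P(x,t)$ given in \cref{Pxt-generating-function}.

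The main obstacle is essentially the recognition of the right EGF identity: once one sees that the binomial factor $\binom{n+1}{k}$ with weight $2^n$ is the signature of a product of $Q(x,t)$ with its antiderivative (rescaled by $2s \mapsto s$), the rest reduces to a one-line substitution in the integral. A minor bookkeeping point is keeping track of the factor $2$ rescaling between $\mathscr{F}(x,1/2;2t)$ and $\sum 2^n F_n(x,1/2) t^n/n!$ when forming the antiderivative, but this is automatic.
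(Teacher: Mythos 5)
Your proposal is correct and is essentially the paper's own proof: the paper likewise factors $P(x,t)$ as $\mathscr{F}(x,1/2;2t)$ times $\frac{1}{x^{1/2}(1-x)^{1/2}}\bigl(\arcsin(x^{1/2}e^{(1-x)t})-\arcsin(x^{1/2})\bigr)$, identifies the second factor as the antiderivative of $Q(x,t)$ (the paper checks this by differentiation where you check it by integration), and reads off the binomial convolution from the Cauchy product. The only cosmetic difference is that no symmetry $k\leftrightarrow n-k$ is actually needed at the end, since $F_{n-k}F_k=F_kF_{n-k}$.
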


\begin{proof}
	Consider
	\begin{align*}
		P \left(x, t \right) 
	   	&= \frac{e^{(1-x)t} \left(\arcsin(x^{1/2} e^{(1-x)t}) - \arcsin(x^{1/2}) \right)}{x^{1/2} (1-x e^{2(1-x)t})^{1/2}}\\
		&= \mathscr{F}\left(x,\frac{1}{2}; 2t \right) \frac{1}{x^{1/2} (1-x)^{1/2}} \left(\arcsin(x^{1/2} e^{(1-x)t}) - \arcsin(x^{1/2}) \right).
	\end{align*}
	Since
	\begin{align*}
		\frac{d}{dt} \frac{1}{x^{1/2}(1-x)^{1/2}} \bigg(\arcsin(x^{1/2} e^{(1-x)t}) - \arcsin(x^{1/2}) \bigg) &= \mathscr{F}\left(x, \frac{1}{2}; 2t \right),
	\end{align*}
	it holds that
	\[
		\frac{1}{x^{1/2}(1-x)^{1/2}} \left(\arcsin(x^{1/2} e^{(1-x)t}) - \arcsin(x^{1/2}) \right) = \sum_{n=0}^\infty 2^n F_n(x, 1/2) \frac{t^{n+1}}{(n+1)!}.
	\]
	Thus, we have
	\begin{align*}
		P (x, t) = \sum_{n=1}^\infty \left(2^{n-1}\sum_{k=0}^{n-1} {n \choose k} F_{n-k-1}(x,1/2) F_k (x,1/2) \right) \frac{t^n}{n!},
	\end{align*}
	which concludes the proof.
\end{proof}

\bibliographystyle{amsplain}
\bibliography{References1} 

\end{document}